\newtheorem{thm}{Theorem}[section]
\newtheorem{defini}{Definition}[section]
\newtheorem{rem}{Remark}[section]
\newtheorem{lem}{Lemma}[section]
\newtheorem{prop}{Proposition}[section]
\newtheorem{coro}{Corollary}[section]
\newtheorem{ex}{Examples}
\begin{document}
\title[ ]{ Spectral inclusions between $C_0$-quasi-semigroups and their generators }

\author[ A. Tajmouati, Y. Zahouan and M.A. Ould Mohamed Baba
\\]
{ A. Tajmouati,  Y. Zahouan and M.A. Ould Mohamed Baba}
\address{A. Tajmouati, Y. Zahouan and M.A. Ould Mohamed Baba\newline
 Sidi Mohamed Ben Abdellah
 Univeristy,
 Faculty of Sciences Dhar Al Mahraz, Fez,  Morocco.}
\email{abdelaziz.tajmouati@usmba.ac.ma}
\email{zahouanyouness1@gmail.com}
\email{bbaba2012@gmail.com}
\maketitle

\begin{center}
	\textbf{Abstract}
\end{center}
In this paper,  we show a spectral inclusion of a different spectra of a $C_{0}$-quasi-semigroup and its generator and precisely for ordinary, point, approximate point, residual, essential and regular spectra.\\

\textbf{keywords}:
$C_0$-quasi-semigroup, $C_0$-semigroup, semi-regular, ascent, descent, spectrum,  point spectrum,  essential, regular spectra.

\section{\textbf{Introduction and preliminaries}}
Let $X$ be a complex Banach space and $\mathcal{B}(X)$ the algebra of all bounded linear operators on  $X$. We denote by $D(T)$, $Rg(T)$, $Rg^\infty(T):=\cap_{n\geq 1}Rg(T^n)$, $N(T)$, $\rho(T)$, $\sigma(T),$ and $\sigma_p(T)$ respectively the domain, the range, the hyper range, the kernel, the resolvent and the spectrum of $T$, where
$\sigma(T)=\{\lambda\in\mathbb{C}\,\backslash \, \lambda-T \,\mbox{is not bijective}\}$.
The point , the approximate point , the residual and regular spectrum spectra are defined by
\begin{itemize}
 \item $\sigma_p(T)=\{\lambda \in \mathbb{C}\,\backslash \, \lambda -T \,\mbox{is not injective }\}$ 
 \item $\sigma_a(T)=\{\lambda\in  \mathbb{C}\,\backslash \, \lambda-T \,\mbox{is not injective or $Rg(\lambda -T)$ is not closed in  X }\} $. and from  \cite[lemma 1.9]{r.7}  $\lambda$ is an approximate eigenvalue,if and only if there exists a sequence $(x_{n})_{n \in \mathbb{N}} \subset D(A)$ called an approximate eigenvector , such that $\|x_{n} \| =1 $ and $lim_{n\to \infty}\| Ax_{n} - \lambda x_{n} \| = 0$.
 \item $\sigma_r(T)=\{\lambda\in  \mathbb{C}\,\backslash \,  \,\mbox{ $Rg(\lambda -T)$ is not dense in  X }\} $
 \item $\sigma_\gamma(T)=\{\lambda \in \mathbb{C}\,\backslash \, \lambda -T \,\mbox{is not semi regular }\}$, ie $\lambda \in \sigma_\gamma(T) $ if $Rg(\lambda -T)$ is not closed or $N(\lambda -T) \not\subset Rg^\infty(\lambda -T)$
\end{itemize}

An operator $T\in \mathcal{B}(X)$ is called Fredholm operator, in symbol $T\in \Phi(X)$, if $\alpha(T)=dim N(T)$ and $\beta(T)=codim Rg(T)$ are finite, and the essential  spectrum is defined by,
\begin{center}	
	$\sigma_e(T)=\{\lambda\in\mathbb{C}\,:\, \lambda-T \notin \Phi(X)\}.$
\end{center}

The family $(T(t))_{t\geq 0}\subseteq \mathcal{B}(X)$ is a $C_0$-semigroup if it has the following properties:

\begin{enumerate}
	\item $T(0)=I$
	\item $T(t)T(s)=T(t+s)$;
	\item The map $t\rightarrow T(t)x$ from $[0,+\infty[$ into $X$ is continuous for all $x\in X$;	
\end{enumerate}

In this case, its generator $A$ is defined by
$$\mathcal{D(A)}=\{x\in X\, / \lim_{t\rightarrow 0^+}\frac{T(t)x-x}{t} \; exists \},$$
with $$Ax= \lim_{t\rightarrow 0^+}\frac{T(t)x-x}{t}.$$


The theory of quasi-semigroups of  bounded linear operators, as a generalization of semigroups of operators, was introduced by Leiva and Barcenas \cite{r.3} , \cite{r.4} ,\cite{r.5}.

\begin {defini}\cite{r.3}
Let $X$ be a complex Banach. The family $\left\lbrace R(t,s)\right\rbrace _{t,s\geq 0} \subseteq \mathcal{B}(X)$   is called  a  strongly  continuous  quasi-semigroup   (or   $C_{0}$-quasi-semigroup)  of  operators  if  for  every  $t,s,r\geq 0$  and   $x \in X$  ,
\begin{enumerate}
	\item $R(t,0)= I $, the identity operator on  $X$,
	\item $R(t,s+r)=R(t+r,s)R(t,r)$,
	\item $lim_{s\longrightarrow 0}\left| \left|R(t,s)x - x \right| \right| = 0$ , 
	\item There exists a continuous increasing mapping $ M : [ 0 ; +\infty [ \longrightarrow [ 1 ; +\infty [ $ such that,
	\begin{center}
		$\left| \left|R(t,s)\right| \right| \leq  M(t+s)$
	\end{center}
	
\end{enumerate}
\end {defini}

\begin{defini}\cite{r.3}
	For a  $C_{0}$-quasi-semigroup  $\left\lbrace R(t,s)\right\rbrace _{t,s\geq 0}$ on a Banach space   $X$, let $\mathcal{D}$ be the set of all   $x \in X$    for which 
	the following limits exist,
	\begin{center}
		$ \lim_{s\rightarrow 0^+}\frac{R(0,s)x-x}{s}$ \; \; and  \; \; $ \lim_{s\rightarrow 0^+}\frac{R(t,s)x-x}{s} =  \lim_{s\rightarrow 0^+}\frac{R(t-s,s)x-x}{s}$ , \; $ t> 0 $
	\end{center}
	
	For  $t\geq 0$    we define an operator $A(t)$  on $\mathcal{D}$ as  $A(t)x = \lim_{s\rightarrow 0^+}\frac{R(t,s)x-x}{s}$
	
	The family  $\left\lbrace A(t)\right\rbrace _{t\geq 0}$ is called infinitesimal generator of the $C_{0}$-quasi-semigroups $\left\lbrace R(t,s)\right\rbrace _{t,s\geq 0}$.\\
	
	Throughout this paper we  denote 
	$T(t)$  and  $R(t,s)$ as $C_{0}$-semigroups  $\left\lbrace T(t)\right\rbrace _{t\geq 0}$ and  $C_{0}$-quasi-semigroup  $\left\lbrace R(t,s)\right\rbrace _{t,s\geq 0}$  respectively.We also denote $\mathcal{D}$ as domain for  $A(t)$ , $t\geq 0$.
	\end {defini}
	\begin{rem}\cite [Examples  2.3 and 3.3]{r.12}
	In the semigroups theory, if   $A$  is an infinitesimal generator of $C_{0}$-semigroup with domain   $\mathcal{D(A)}$  , then  $A$      
	is a closed operator  and $\mathcal{D(A)}$ is dense in $X$.  These are is not always true for any  $C_{0}$-quasi-semigroups.
		\end{rem}
	\begin{rem}
			If $ T(t) $ be  a  $C_{0}$-semigroup   on a Banach space   $X$ with its generator $A$ then $R(t,s)$ with $R(t,s) =T(s),$  $t,s\geq 0$ defines a $C_{0}$-quasi-semigroup on  $X$ with  generator  $A(t) = A$ , $t\geq 0 $ and $\mathcal{D} = \mathcal{D(A)}$ .
	\end{rem}
	\begin{ex}
		
		Let $ T(t) $ be  a  $C_{0}$-semigroup   on a Banach space   $X$ with its generator $A$.\\	
		For $t,s\geq 0$ , $R(t,s) = T(g(t+s) - g(t)) $,	where $  g(t) = \int_{0}^t  a(u)du $ and $a \in \mathcal{C}([0 ; \infty[)$ with $a(t) > 0$
		
		The $R(t,s)$ is $C_{0}$-quasi-semigroup on  $X$ with  generator  $A(t) = a(t)A$.
	\end{ex}
	
	\begin{proof}:
  
		\begin{enumerate}
			\item $R(t,0)=T(g(t) - g(t))=T(0)= I $.
			\item \begin{align*} R(t,s+r) &=T(g(t+s+r) -g(t+r)+ g(t+r) - g(t))\\
        &=T(g(t+s+r) -g(t+r) T( g(t+r) - g(t)) \\
        &=R(t+r,s)R(t,r),
        	\end{align*}
			\item $lim_{s\longrightarrow 0}\left| \left|R(t,s)x - x \right| \right| = lim_{s\longrightarrow 0}\left| \left|T(g(t+s) - g(t))x - x \right| \right|= 0$ , since $ g$ is continuous.
			\item  Since  $ T(t) $   is strongly continuous on   $X$  , there exists  $\omega$  and $M_{\omega} > 0 $ such that ,
			\begin{center}
				$\left| \left|T(t)  \right| \right|   \leq  M_{\omega}e^{\omega t}$
			\end{center}
			
			Therefore, $\left| \left|R(t,s)\right| \right| \leq M(t+s)$ , where $M(t+s) =  M_{\omega}e^{\omega( (g(t+s)-g(t))} $
			
			Moreover, 
			\begin{align*}
			A(t)x &= \lim_{s\rightarrow 0^+}\frac{R(t,s)x-x}{s}\\
			&= \lim_{s\rightarrow 0^+}\frac{T(g(t+s) - g(t))x-x}{s}\\
			&= g^{'}(t+s)\dfrac{d}{ds}\left[T(g(t+s) - g(t))x \right] | _{s=0} \\ 
			&= a(t)Ax 
			\end{align*}	
		\end{enumerate}
		Thus, $R(t,s)$ is $C_{0}$-quasi-semigroup on  $X$ with  generator  $A(t) = a(t)A$.
	\end{proof}

	The following results are obtained recently by Sutrima , Ch. Rini Indrati and others \cite{r.12} , there are show some relations between a $C_{0}$-quasi-semigroup and its  generator.
	
	\begin{thm}\cite{r.12}
		Let $R(t,s)$ be a $C_{0}$-quasi-semigroup on  $X$ with  generator  $A(t)$ then,
		
		\begin{enumerate}
			\item For each   $t\geq 0$  ,  $R(t,.)$ is strongly continuous on  $ [ 0 ; +\infty [ $.
			\item For each   $t\geq 0$ and  $x\in X$,
			\begin{center}
				$	\lim_{s\rightarrow 0^+} \frac{1}{s}\int_{0}^{s}R(t,h)x dh = x$
			\end{center}
			\item If $x \in \mathcal{D} $ ,  $t\geq 0$ and $t_{0},s_{0}\geq 0$ then,  $R(t_{0},s_{0})x \in \mathcal{D} $ and 
			\begin{center}
				$R(t_{0},s_{0})A(t)x = A(t)R(t_{0},s_{0})x $
			\end{center}
			\item For each $s > 0$ , $\frac{\partial}{\partial s} (R(t,s)x)= A(t+s)R(t,s)x = R(t,s) A(t+s)x $;  $ x \in \mathcal{D}. $
			\item If  $A(.)$ is locally integrable, then for every $x \in \mathcal{D} $ and   $s \geq 0$,
			
			\begin{center}
				$	R(t,s)x = x + \int_{0}^{s} A(t+h)R(t,h)x dh .  $
			\end{center}
			\item If $ f : [ 0 ; +\infty [ \longrightarrow X$  is a continuous, then for every $t \in [ 0 ; +\infty [$  
			
			\begin{center}
				$	\lim_{r\rightarrow 0^+} \int_{s}^{s+r}R(t,h) f(h)x dh = R(t,s)f(s)$
			\end{center}
		\end{enumerate}
		
	\end{thm}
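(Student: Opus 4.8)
The plan is to transcribe the classical $C_0$-semigroup arguments (continuity of the orbit, the averaging identity, differentiability, and its integral form) into the two-parameter setting, the only genuinely new device being that axiom~(2) comes in two symmetric shapes: writing $s+r=r+s$ and re-applying the law gives, for all $t,s,r\ge 0$,
\[
R(t,s+r)=R(t+r,s)R(t,r)=R(t+s,r)R(t,s).
\]
The first shape detaches a small increment $R(t,r)$ on the right at the base time $t$; the second detaches an increment $R(t+s,r)$ on the left at the shifted time $t+s$. Throughout I would also use the increasing local bound $\|R(t,s)\|\le M(t+s)$ from axiom~(4) to keep everything bounded on compact $s$-intervals.

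For (1), right continuity at $s$ is immediate: by the second shape $R(t,s+r)x=R(t+s,r)\big(R(t,s)x\big)$, and axiom~(3) applied at the time point $t+s$ to the fixed vector $R(t,s)x$ gives $R(t,s+r)x\to R(t,s)x$ as $r\to0^+$. Left continuity is the crux. Here I would use the first shape as $R(t,s)=R(t+s-r,r)R(t,s-r)$, so that
\[
R(t,s)x-R(t,s-r)x=\big[R(t+s-r,r)-I\big]R(t,s-r)x ,
\]
and try to send this to $0$ as $r\to0^+$. This is exactly where the two-parameter structure hurts: unlike the semigroup identity $T(t)x-T(t-h)x=T(t-h)(T(h)x-x)$, I cannot factor the vanishing increment onto a \emph{fixed} vector, because $R(t+s-r,r)$ sits to the \emph{left} of the moving vector $R(t,s-r)x$ and its own time label $t+s-r$ drifts. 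I expect this to be the main obstacle, and the way through is to first establish a \emph{locally uniform} strong continuity of $R(\cdot,\cdot)$ near $(t,s)$ — using the bound $M(t+s)$ on a compact neighbourhood together with a Banach--Steinhaus / equicontinuity argument — and then feed that uniformity into the display above.

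Granting (1), items (2) and (6) are soft. For (2), $\frac1s\int_0^s R(t,h)x\,dh-x=\frac1s\int_0^s\big(R(t,h)x-x\big)\,dh$, and axiom~(3) makes the integrand uniformly small for small $h$, so the mean tends to $x$; (6) is the same Lebesgue-point computation for the continuous integrand $h\mapsto R(t,h)f(h)$ (continuous since $R(t,\cdot)$ is strongly continuous and $f$ is continuous), giving $\frac1r\int_s^{s+r}R(t,h)f(h)\,dh\to R(t,s)f(s)$.

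Items (3)--(5) are the differentiability package. For (4) I would compute the right derivative from the second shape: $\frac1h\big(R(t,s+h)-R(t,s)\big)x=\frac1h\big(R(t+s,h)-I\big)R(t,s)x$, whose limit is $A(t+s)R(t,s)x$ provided $R(t,s)x\in\mathcal{D}$; the companion identity $A(t+s)R(t,s)x=R(t,s)A(t+s)x$ is then the commutation in (3) with indices matched. Thus (3) is logically prior: for $x\in\mathcal{D}$ I would pass $R(t_0,s_0)$ through the defining difference quotient of $A(t)$, showing the limit survives and equals $R(t_0,s_0)A(t)x$, which simultaneously yields $R(t_0,s_0)x\in\mathcal{D}$ and the commutation (the underlying operator commutativity being the quasi-semigroup analogue of $T(s)T(h)=T(h)T(s)$, visible in the Example above, where all $R(t,s)$ are values of a single commutative semigroup). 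Finally (5) is just the integral form of (4): since $h\mapsto R(t,h)x$ has derivative $A(t+h)R(t,h)x$, which is locally integrable by hypothesis, the vector-valued fundamental theorem of calculus gives $R(t,s)x-x=\int_0^s A(t+h)R(t,h)x\,dh$, using $R(t,0)=I$.
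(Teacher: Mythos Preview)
The paper does not supply its own proof of this theorem: it is quoted from \cite{r.12} (Sutrima, Indrati, Aryati, Mardiyana) and stated without any argument. There is therefore nothing in the present paper to compare your proposal against.

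On your sketch itself: items (1), (2), (5), (6) track the classical semigroup arguments correctly, and your diagnosis of the left-continuity obstacle in (1) is accurate. The point that needs more than a sketch is (3). Passing $R(t_0,s_0)$ through the difference quotient that defines $A(t)$ requires $R(t_0,s_0)R(t,r)=R(t,r)R(t_0,s_0)$ for \emph{arbitrary} $t,r,t_0,s_0\ge 0$, and this commutativity is not a consequence of the quasi-semigroup law $R(t,s+r)=R(t+r,s)R(t,r)$ by itself. Your parenthetical that it is ``visible in the Example above, where all $R(t,s)$ are values of a single commutative semigroup'' is an observation about that particular example, not an argument for general quasi-semigroups. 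Since your route to (4) also passes through (3), this commutation is the hinge of the whole differentiability package and would need an independent justification (or an explicit extra hypothesis) before the proof can be considered complete.
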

	
	In this work, we show that the spectral inclusion of different spectra  of $C_{0}$-semigroups  valid for $C_{0}$-quasi-semigroups.

	\section{\textbf{Main results}}
		For later use, we introduce the following operator acting on $X$ and depending on the parameters $\lambda \in \mathbb{C}$ and  $ t,s\geq 0$ :
	\begin{center}
		$D_\lambda(t,s)x = \int_0^s e^{\lambda(s-h)}R(t,h)xdh$ \; \;for all $x\in X$ .
	\end{center}
	$D_\lambda(t,s)$ is a bounded linear operator on $X$\\
	
	\begin{thm} \label{t1} Let $A(t) $ be the generator of the C0-quasi-semigroup  $\left\lbrace R(t,s)\right\rbrace _{t,s\geq 0}$ . Then for all $\lambda\in\mathbb{C}$ and all $t,s\geq 0$ we have
		\begin{enumerate}
			\item For all $x\in X$, $$(\lambda - A(t))D_\lambda(t,s)x=[e^{\lambda s} -R(t,s)]x .$$
			\item For all $x\in \mathcal{D} $, $$D_\lambda(t,s)(\lambda - A(t))x=[e^{\lambda s} - R(t,s)]x .$$
		\end{enumerate}
	
	\end{thm}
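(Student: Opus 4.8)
The plan is to imitate the classical resolvent identity for $C_0$-semigroups, where $\int_0^s e^{\lambda(s-h)}T(h)x\,dh$ inverts $\lambda-A$ up to the boundary term $e^{\lambda s}-T(s)$, and to replace the semigroup law by the quasi-semigroup machinery of Theorem 1.1. I would handle (2) first, since for $x\in\mathcal{D}$ the operator $A(t)$ can be moved through the integral, and then derive (1) as a regularization statement, namely that $D_\lambda(t,s)x\in\mathcal{D}$ for every $x\in X$.

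For (2) I would fix $x\in\mathcal{D}$ and study the $X$-valued function $\phi(h)=e^{\lambda(s-h)}R(t,h)x$ on $[0,s]$. Using Theorem 1.1(4), namely $\partial_h R(t,h)x=R(t,h)A(t+h)x$, together with the product rule one obtains
\[
\phi'(h)=-e^{\lambda(s-h)}R(t,h)\bigl(\lambda-A(t+h)\bigr)x .
\]
Because $R(t,\cdot)$ is strongly continuous by Theorem 1.1(1) and $A(\cdot)$ is (assumed) locally integrable, $\phi$ is continuously differentiable, so the fundamental theorem of calculus gives $\phi(s)-\phi(0)=\int_0^s\phi'(h)\,dh$, i.e.
\[
R(t,s)x-e^{\lambda s}x=-\int_0^s e^{\lambda(s-h)}R(t,h)\bigl(\lambda-A(t+h)\bigr)x\,dh .
\]
Rearranging isolates the desired right-hand side $e^{\lambda s}x-R(t,s)x$, and it then remains to identify the integral with $D_\lambda(t,s)(\lambda-A(t))x$, for which I would invoke the commutation relation Theorem 1.1(3) to carry the generator across $R(t,h)$.

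For (1) and a general $x\in X$ I would argue straight from the definition $A(t)y=\lim_{\tau\to0^+}\tau^{-1}(R(t,\tau)y-y)$ with $y=D_\lambda(t,s)x$. Expanding $R(t,\tau)D_\lambda(t,s)x=\int_0^s e^{\lambda(s-h)}R(t,\tau)R(t,h)x\,dh$, I would show that the difference quotient converges: the averaging limit Theorem 1.1(2) supplies the boundary contribution $e^{\lambda s}x-R(t,s)x$, while the exponential factor produces $\lambda D_\lambda(t,s)x$. This would simultaneously establish $D_\lambda(t,s)x\in\mathcal{D}$ and yield $(\lambda-A(t))D_\lambda(t,s)x=e^{\lambda s}x-R(t,s)x$.

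The main obstacle, and the step demanding the most care, is the time-dependence of the generator. In the semigroup proof one uses $T(\tau)T(h)=T(\tau+h)$ to perform the substitution $u=h+\tau$; for a quasi-semigroup this is unavailable, since the composition law Theorem 1.1(2) reads $R(t,h+\tau)=R(t+h,\tau)R(t,h)$ and does not reduce $R(t,\tau)R(t,h)$ to a single $R(t,\cdot)$. Consequently the differentiation in (2) naturally produces the operator $A(t+h)$ rather than the fixed $A(t)$ of the statement, and the crux of the whole argument is to pass from $A(t+h)$ to $A(t)$ under the integral sign. The commutation relation Theorem 1.1(3) lets one move $A(t)$ past $R(t,h)$ but does not alter its time argument, so this reduction is precisely the delicate point I would scrutinize first; it is here that the quasi-semigroup setting departs essentially from the classical one.
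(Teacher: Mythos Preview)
Your overall strategy coincides with the paper's: both parts rest on the same integration-by-parts computation for $h\mapsto e^{\lambda(s-h)}R(t,h)x$, and for part~(1) the paper, like you, verifies $D_\lambda(t,s)x\in\mathcal{D}$ by working out the defining difference quotients directly (it differentiates under the integral sign rather than appealing to the averaging limit of Theorem~1.1(2), but the structure is the same).

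The obstacle you isolated---passing from $A(t+h)$ to the fixed $A(t)$ under the integral---is exactly the one non-routine step, and the paper does \emph{not} overcome it by any commutation or limiting device. In both parts it simply inserts the line ``because $A(t+h)=A(t)$ for all $t,h\geq 0$'' into the computation; that is, the proof tacitly assumes the generator is time-independent. This hypothesis is not stated in the theorem and is false in general (it already fails in the paper's own Example~1, where $A(t)=a(t)A$ with non-constant $a$). So your instinct that Theorem~1.1(3) alone cannot bridge the gap is correct: the paper does not bridge it either, it assumes it away. If you want a proof valid for genuinely time-dependent $A(t)$, some additional hypothesis or a different formulation of the identity (with $A(t+h)$ inside the integral) is unavoidable.
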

	
	\begin{proof}
		\begin{enumerate}
			\item For all $x\in X$ we have
			
			\begin{eqnarray*}
				R(0,r)D_\lambda(t,s)x   &=&   R(0,r) \int_0^s e^{\lambda(s-h)}R(t,h)xdh\\
				&=&\int_0^s e^{\lambda(s-h)}R(0,r)R(t,h)xdh \\
			\end{eqnarray*}

			And  we obtain ,
			
			\begin{eqnarray*}
				\lim_{r\rightarrow 0^+}\frac{R(0,r)D_\lambda(t,s)x - D_\lambda(t,s)x}{r} &=& \lim_{r\rightarrow 0^+}\frac{\int_0^s e^{\lambda(s-h)}R(0,r)R(t,h)xdh -\int_0^s e^{\lambda(s-h)}R(t,h)xdh}{r}\\
				&=&\frac{\partial}{\partial r}\left[ \int_0^{s}e^{\lambda(s-h)}R(0,r)R(t,h)xdh \right] _{|r=0} \\
			\end{eqnarray*}

			Then  $\lim_{r\rightarrow 0^+}\frac{R(0,r)D_\lambda(t,s)x - D_\lambda(t,s)x}{r} $   exists.\\

			And,
			
			\begin{eqnarray*}
				\lim_{r\rightarrow 0^+}\frac{R(t,r)D_\lambda(t,s)x - D_\lambda(t,s)x}{r}  &=&   \lim_{r\rightarrow 0^+}\frac{\int_0^s e^{\lambda(s-h)}R(t,r)R(t,h)xdh -\int_0^s e^{\lambda(s-h)}R(t,h)xdh}{r}\\
				&=& \frac{\partial}{\partial r}\left[ \int_0^{s}e^{\lambda(s-h)}R(t,r)R(t,h)xdh \right] _{|r=0} \\
				&=&\left[ \int_0^{s}e^{\lambda(s-h)}\frac{\partial}{\partial r}(R(t,r))R(t,h)xdh \right] _{|r=0} \\
				&=& \left[ \int_0^{s}e^{\lambda(s-h)}A(t+r)R(t,r)R(t,h)xdh \right] _{|r=0} \\
				&=& \int_0^{s}e^{\lambda(s-h)}A(t)R(t,h)xdh      
			\end{eqnarray*}

			Moreover,
			\begin{eqnarray*} 
				\lim_{r\rightarrow 0^+}\frac{R(t - r,r)D_\lambda(t,s)x - D_\lambda(t,s)x}{r}  &=&    \lim_{r\rightarrow 0^+}\frac{\int_0^s e^{\lambda(s-h)}R(t-r,r)R(t,h)xdh -\int_0^s e^{\lambda(s-h)}R(t,h)xdh}{r}\\
				&=& \frac{\partial}{\partial r}\left[ \int_0^{s}e^{\lambda(s-h)}R(t-r,r)R(t,h)xdh \right] _{|r=0} \\
				&=&\left[ \int_0^{s}e^{\lambda(s-h)}\frac{\partial}{\partial r}(R(t-r,r))R(t,h)xdh \right] _{|r=0} \\
				&=&\left[ \int_0^{s}e^{\lambda(s-h)}A(t)R(t-r,r)R(t,h)xdh \right] _{|r=0} \\
				&=&\int_0^{s}e^{\lambda(s-h)}A(t)R(t,h)xdh     
			\end{eqnarray*}

			Thus , $\lim_{r\rightarrow 0^+}\frac{R(t,r)D_\lambda(t,s)x - D_\lambda(t,s)x}{r}  = \lim_{r\rightarrow 0^+}\frac{R(t - r,r)D_\lambda(t,s)x - D_\lambda(t,s)x}{r}$

			Hence, we deduce that $D_\lambda(t , s)x\in \mathcal{D}$ And , \\
			
			\begin{eqnarray*}
				A(t)D_\lambda(t,s)x  &=& \int_0^{s}e^{\lambda(s-h)}A(t)R(t,h)xdh \\
				&=&  \int_0^{s}e^{\lambda(s-h)}A(t+h)R(t,h)xdh ,\; \; because \; \;  A(t+h) = A(t) for \; all \; \;   t,h \geq 0 .\\
				&=&  \int_0^{s}e^{\lambda(s-h)}\frac{\partial}{\partial h}(R(t,h))xdh \\
				&=&  \left[ e^{\lambda(s-h)}R(t,h) \right]_{0}^{s} + \lambda\int_0^{s}e^{\lambda(s-h)}R(t,h)xdh \\
				&=&  R(t,s)x - e^{\lambda s}x + \lambda D_\lambda(t,s)x
			\end{eqnarray*}                    
			
			Finaly , $(\lambda - A(t))D_\lambda(t,s)x=[e^{\lambda s} -R(t,s)]x $ for all $x \in X $.

			\item For all $x\in \mathcal{D} $ and all $t ,s \geq 0$ we have ,
			\begin{eqnarray*}
				D_\lambda(t ,s)A(t)x &=& \int_0^s e^{\lambda(s-h)}R(t,h)A(t)xdh\\
				&=& \int_0^s e^{\lambda(s-h)}R(t,h)A(t+h)xdh\\
				&=& \int_0^{s}e^{\lambda(s-h)}\frac{\partial}{\partial r}(R(t,h))xdh\\
				&=&  \left[ e^{\lambda(s-h)}R(t,h) \right]_{0}^{s} + \lambda\int_0^{s}e^{\lambda(s-h)}R(t,h)xdh \\
				&=&  R(t,s)x - e^{\lambda s}x + \lambda D_\lambda(t,s)x
			\end{eqnarray*}
			Thus, we deduce for all $x\in D(A)$
			$D_\lambda(t,s)(\lambda - A(t))x=[e^{\lambda s} - R(t,s)]x $.
		\end{enumerate}
	\end{proof}
	
	\begin{coro}\label{c1}
		In the case of $C_0$-semigroup $T(s)= R(t,s)$ , we retrieve the equality \cite{r.9},
		\begin{center}
			$(\lambda-A)D_\lambda(s)x=[e^{\lambda s}-T(s)]x $
		\end{center}
		With $D_\lambda(s)x = \int_0^s e^{\lambda(s-h)}T(h)xdh$ for all $x\in X$  and $s\geq 0$
	\end{coro}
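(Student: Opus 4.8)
The plan is to obtain this statement as an immediate specialization of Theorem \ref{t1}, using the embedding of $C_0$-semigroups into $C_0$-quasi-semigroups recorded in Remark 1.2. So, starting from a $C_0$-semigroup $\{T(s)\}_{s\geq 0}$ with generator $A$ and domain $\mathcal{D}(A)$, I would set $R(t,s):=T(s)$ for all $t,s\geq 0$. By Remark 1.2 this is a $C_0$-quasi-semigroup whose generator is $A(t)=A$ for every $t\geq 0$, with common domain $\mathcal{D}=\mathcal{D}(A)$; in particular the hypotheses of Theorem \ref{t1} are satisfied for this $R$ and this $A(\cdot)$.

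Next I would observe that under this identification the auxiliary operator $D_\lambda(t,s)$ introduced at the beginning of Section 2 becomes
$$D_\lambda(t,s)x=\int_0^s e^{\lambda(s-h)}R(t,h)x\,dh=\int_0^s e^{\lambda(s-h)}T(h)x\,dh,$$
which no longer depends on $t$; denote it $D_\lambda(s)x$, exactly the operator appearing in the statement, and note it is bounded on $X$ since $T(\cdot)$ is strongly continuous and locally bounded.

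Finally, applying part (1) of Theorem \ref{t1} with this choice of $R$ and $A(t)=A$ yields, for every $x\in X$,
$$(\lambda-A)D_\lambda(s)x=(\lambda-A(t))D_\lambda(t,s)x=[e^{\lambda s}-R(t,s)]x=[e^{\lambda s}-T(s)]x,$$
which is the claimed identity; this also recovers the known formula of \cite{r.9}. There is essentially no real obstacle here — the only point requiring care is the bookkeeping that the quasi-generator of the embedded semigroup truly coincides with the semigroup generator $A$ (so that Theorem 1.1(4), used in the proof of Theorem \ref{t1}, is available), and this is precisely the content of Remark 1.2. Alternatively, one can give a fully self-contained argument by repeating the computation in the proof of Theorem \ref{t1}(1) with $R(t,h)$ replaced throughout by $T(h)$ and $A(t+h)$ by $A$, invoking the classical identities $\frac{\partial}{\partial h}T(h)x=AT(h)x=T(h)Ax$ for $x\in\mathcal{D}(A)$ together with an integration by parts in $h$.
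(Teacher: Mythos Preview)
Your proposal is correct and matches the paper's approach exactly: the paper states this as an immediate corollary of Theorem \ref{t1} with no separate proof, and your argument simply spells out the specialization $R(t,s)=T(s)$, $A(t)=A$ from Remark 1.2 that makes this immediate.
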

	
	\begin{coro}\label{c2}  Let $A(t)$ be the generator of a $C_{0}$-quasi-semigroup $(R(t,s))_{t,s\geq 0}$ . Then for all $\lambda \in \mathbb{C}$ , $t,s\geq 0$ and $n \in \mathbb{N}$ ,
		\begin{enumerate}
			\item For all  $x\in X$ ,
			\begin{center}
				$(\lambda-A(t))^n[D_\lambda(t,s)]^nx=[e^{\lambda s}-R(t,s)]^nx$.
			\end{center}
			\item For all  $x\in \mathcal{D}^n$ \, (Domain of $A(t)^n$) ,  
			\begin{center}
				$[D_\lambda(t,s)]^n(\lambda-A(t)^n)x=[e^{\lambda s}-R(t,s)]^nx.$
			\end{center}
			\item $N[\lambda-A(t)]\subseteq N[e^{\lambda s}-R(t,s)].$
			\item $Rg[e^{\lambda s}-R(t,s)]\subseteq Rg[\lambda-A(t)].$
			\item $N[\lambda-A(t)]^n\subseteq N[e^{\lambda s}-R(t,s)]^n.$
			\item $Rg[e^{\lambda s}-R(t,s)]^n\subseteq Rg[\lambda-A(t)]^n.$
			\item $Rg^\infty[e^{\lambda s} - R(t,s)]\subseteq Rg^\infty[\lambda-A(t)].$
		\end{enumerate}
	\end{coro}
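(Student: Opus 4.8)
The plan is to abbreviate $B:=\lambda-A(t)$ (with domain $\mathcal{D}$) and $C:=e^{\lambda s}-R(t,s)$, $D:=D_\lambda(t,s)$ (both bounded), and to read Theorem \ref{t1} as the two operator identities $BDx=Cx$ for $x\in X$ and $DBx=Cx$ for $x\in\mathcal{D}$. I would combine these with the two commutation facts supplied by the structural theorem recalled above: $R(t,s)$ maps $\mathcal{D}$ into $\mathcal{D}$ and $A(t)R(t,s)x=R(t,s)A(t)x$ on $\mathcal{D}$ (its part (3)), whence $BC=CB$ on $\mathcal{D}$; and, comparing the two identities of Theorem \ref{t1} on $\mathcal{D}$, one gets $BDx=DBx$ there, so $B$ and $D$ also commute on $\mathcal{D}$. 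I would first record two bookkeeping facts, each by routine induction: $C$ (equivalently $R(t,s)$) leaves every iterated domain $\mathcal{D}^k$ invariant, and $D$ sends $\mathcal{D}^k$ into $\mathcal{D}^{k+1}$; iterating the latter from $k=0$ yields $D^nx\in\mathcal{D}^n$ for every $x\in X$, which is exactly what makes the expression $B^nD^nx$ in (1) meaningful.

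Items (1) and (2) would then be proved by induction on $n$, the case $n=1$ being Theorem \ref{t1} itself. For (1), assuming $B^nD^n=C^n$ on $X$, I would write $B^{n+1}D^{n+1}x=B^n\bigl(BD(D^nx)\bigr)=B^n\bigl(CD^nx\bigr)$ using the first identity on $D^nx$, and then push $C$ through $B^n$ via $BC=CB$ one factor at a time to reach $C\,B^nD^nx=C\cdot C^nx=C^{n+1}x$; this interchange is legitimate precisely because $D^nx\in\mathcal{D}^n$ and $C$ preserves each $\mathcal{D}^k$. For (2), assuming $D^nB^n=C^n$ on $\mathcal{D}^n$ and taking $x\in\mathcal{D}^{n+1}$, I would write $D^{n+1}B^{n+1}x=D\bigl(D^nB^n(Bx)\bigr)=D\bigl(C^nBx\bigr)$ by the inductive hypothesis applied to $Bx\in\mathcal{D}^n$, then use $C^nB=BC^n$ on $\mathcal{D}$ and finally the second identity $DB=C$ on $C^nx\in\mathcal{D}$ to obtain $C^{n+1}x$. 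Throughout I deliberately commute $B$ with $C$ and never $D$ with $C$: there is no reason for $D_\lambda(t,s)$ and $R(t,s)$ to commute, since that would require $R(t,h)R(t,s)=R(t,s)R(t,h)$, which the quasi-semigroup law does not give.

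Items (3)–(7) are then formal consequences. From (2) with a single application, $x\in N[\lambda-A(t)]$ forces $Cx=DBx=0$, which is (3); from (1), $Cx=BDx$ gives (4); the same two arguments with $B^n,C^n,D^n$ in place of $B,C,D$ yield (5) and (6), of which (3) and (4) are the cases $n=1$; and (7) follows by intersecting the inclusions of (6) over all $n$, since $Rg^\infty=\bigcap_nRg[(\cdot)^n]$ by definition. The one genuinely delicate point, and the step I expect to be the main obstacle, is the induction for (1) and (2): every interchange of the unbounded $B$ with the bounded $C$ must be justified vector by vector, which is what forces the preliminary verification that $D^nx\in\mathcal{D}^n$ and that $C$ leaves each $\mathcal{D}^k$ invariant. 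Once this domain bookkeeping is in place, the remaining algebra is immediate.
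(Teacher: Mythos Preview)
Your approach is correct and is essentially the paper's: everything is derived from the two identities of Theorem~\ref{t1}, $(\lambda-A(t))D_\lambda(t,s)=e^{\lambda s}-R(t,s)=D_\lambda(t,s)(\lambda-A(t))$, together with the commutation $A(t)R(t_0,s_0)=R(t_0,s_0)A(t)$ on $\mathcal{D}$ from the structural theorem. The only difference is one of detail: the paper's proof is literally the single line ``follow easily from $e^{\lambda s}x-R(t,s)x=(\lambda-A(t))D_\lambda(t,s)x=D_\lambda(t,s)(\lambda-A(t))x$'', whereas you have (rightly) unpacked the induction and the domain bookkeeping needed to make $(\lambda-A(t))^n D_\lambda(t,s)^n$ well defined and to justify interchanging $B$ with $C$ at each step.
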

	
	\begin{proof}
		
		follow easily from ,
		\begin{eqnarray*}
			e^{\lambda s}x - R(t,s)x &=& (\lambda - A(t))D_\lambda(t,s)x \\
			&=&   D_\lambda(t,s)(\lambda - A(t))x
		\end{eqnarray*}
		\end {proof}
		The following theorem characterizes the  ordinary, point, approximate point, essential and residual spectra  of a $C_{0}$-quasi-semigroup.
		
		\begin{thm} For the generator $A(t)$ of a  $C_{0}$-quasi-semigroup $(R(t , s))_{t , s\geq 0 }$ there exist the spectral inclusions
			\begin{enumerate}
				\item $ e^{\sigma(A(t))s}\subset \sigma(R(t,s))$
				\item $ e^{\sigma_p(A(t))s}\subset \sigma_p(R(t,s))$
				\item $ e^{\sigma_a(A(t))s}\subset \sigma_a(R(t,s)).$
				\item $ e^{\sigma_e(A(t))s}\subset \sigma_e(R(t,s)).$
				\item $ e^{\sigma_r(A(t))s}\subset \sigma_r(R(t,s)).$

			\end{enumerate}
		\end{thm}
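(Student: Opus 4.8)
The plan is to establish each inclusion by fixing $\lambda\in\sigma_*(A(t))$ and showing that the corresponding defect of $\lambda-A(t)$ is inherited by $e^{\lambda s}-R(t,s)$, so that $e^{\lambda s}\in\sigma_*(R(t,s))$; the entire argument rests on Theorem \ref{t1} and the set-theoretic inclusions gathered in Corollary \ref{c2}. The point and residual cases are immediate. For the point spectrum $(2)$, non-injectivity of $\lambda-A(t)$ means $N[\lambda-A(t)]\neq\{0\}$, and Corollary \ref{c2}(3) embeds this kernel into $N[e^{\lambda s}-R(t,s)]$, so $e^{\lambda s}-R(t,s)$ is not injective. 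For the residual spectrum $(5)$, if $Rg[\lambda-A(t)]$ is not dense, then since $Rg[e^{\lambda s}-R(t,s)]\subseteq Rg[\lambda-A(t)]$ by Corollary \ref{c2}(4), passing to closures gives $\overline{Rg[e^{\lambda s}-R(t,s)]}\subseteq\overline{Rg[\lambda-A(t)]}\neq X$, so the smaller range is not dense either. The ordinary spectrum $(1)$ then combines these: if $\lambda-A(t)$ is not bijective it is not injective or not surjective, and the first defect transfers via the kernel inclusion while the second transfers via the range inclusion, since $Rg[e^{\lambda s}-R(t,s)]\subseteq Rg[\lambda-A(t)]\neq X$ forces $Rg[e^{\lambda s}-R(t,s)]\neq X$.

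For the approximate point spectrum $(3)$ I would invoke the sequential characterization recalled in the introduction. Taking an approximate eigenvector $(x_n)\subset\mathcal{D}$ with $\|x_n\|=1$ and $\|(\lambda-A(t))x_n\|\to0$, I apply Theorem \ref{t1}(2) to obtain $[e^{\lambda s}-R(t,s)]x_n=D_\lambda(t,s)(\lambda-A(t))x_n$. Since $D_\lambda(t,s)$ is a bounded operator, the right-hand side tends to $0$ in norm, so $(x_n)$ is an approximate eigenvector of $R(t,s)$ at $e^{\lambda s}$, giving $e^{\lambda s}\in\sigma_a(R(t,s))$.

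The essential spectrum $(4)$ is where I expect the main difficulty, and I would argue by contraposition. Assuming $e^{\lambda s}-R(t,s)\in\Phi(X)$, both $\alpha(e^{\lambda s}-R(t,s))$ and $\beta(e^{\lambda s}-R(t,s))$ are finite. The kernel inclusion Corollary \ref{c2}(3) immediately yields $\alpha(\lambda-A(t))\leq\alpha(e^{\lambda s}-R(t,s))<\infty$. The range inclusion Corollary \ref{c2}(4) gives $Rg[e^{\lambda s}-R(t,s)]\subseteq Rg[\lambda-A(t)]\subseteq X$, and the delicate step is to convert this into the codimension bound $\beta(\lambda-A(t))\leq\beta(e^{\lambda s}-R(t,s))<\infty$; this is exactly the observation that $X/Rg[\lambda-A(t)]$ is a quotient of $X/Rg[e^{\lambda s}-R(t,s)]$, so a larger subspace has the smaller codimension. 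Together these show $\lambda-A(t)\in\Phi(X)$, i.e. $\lambda\notin\sigma_e(A(t))$, which is the required contrapositive. Throughout, the common thread is that every one-sided obstruction (kernel too big, range too small) is monotone under the two containments of Corollary \ref{c2}, so no new estimate beyond boundedness of $D_\lambda(t,s)$ and the quotient argument for codimension is needed.
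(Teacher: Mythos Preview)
Your proof is correct and, for items (2)--(5), it is essentially the paper's argument verbatim: the point, residual, and essential cases use exactly the kernel and range inclusions of Corollary~\ref{c2} in the way you describe, and for the approximate point spectrum the paper also pushes an approximate eigenvector through the identity $[e^{\lambda s}-R(t,s)]x_n=D_\lambda(t,s)(\lambda-A(t))x_n$ and bounds by $\|D_\lambda(t,s)\|$.

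The one place you diverge is item~(1). You argue directly: if $\lambda-A(t)$ fails to be injective use Corollary~\ref{c2}(3), if it fails to be surjective use Corollary~\ref{c2}(4). The paper instead argues by contraposition, assuming $e^{\lambda s}-R(t,s)$ has a bounded inverse $F_\lambda(t,s)$ and then exhibiting $F_\lambda(t,s)D_\lambda(t,s)=D_\lambda(t,s)F_\lambda(t,s)$ as an explicit two-sided inverse of $\lambda-A(t)$. Your route is more economical and reuses only the inclusions already needed elsewhere; the paper's route has the advantage of producing an explicit resolvent formula $(\lambda-A(t))^{-1}=D_\lambda(t,s)(e^{\lambda s}-R(t,s))^{-1}$, which is the quasi-semigroup analogue of the classical Laplace-transform representation and is often useful in its own right.
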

		
		\begin{proof}
			$\,$\\
			\begin{enumerate}
				\item Let $\lambda\in \mathbb{C}$ such that for all $t\geq0$
				$$\ e^{\lambda s}\notin \sigma(R(t,s)),$$
				then the operator $ e^{\lambda s}-R(t,s)$ is invertible where $F_\lambda(t,s)$ its inverse.\\
				By Theorem \ref{t1}, we obtain for every $x\in \mathcal{D}$
				\begin{eqnarray*}
					x &=& F_\lambda(t,s)[e^{\lambda s} - R(t,s)]x\\
					&=& F_\lambda(t,s)[D_\lambda(t,s)(\lambda -A(t))]x\\
					&=& [F_\lambda(t,s)D_\lambda(t,s)](\lambda -A(t))x.
				\end{eqnarray*}
				On the other hand, also from Theorem \ref{t1}, we obtain for every $x\in X$
				\begin{eqnarray*}
					x &=& [e^{\lambda s}- R(t,s)]F_\lambda(t,s)x;\\
					&=& [(\lambda-A(t))D_\lambda(s)]F_\lambda(t,s)x;\\
					&=& (\lambda-A(t))[D_\lambda(t,s)F_\lambda(t,s)]x.
				\end{eqnarray*}
				Since we know that $R(t,s)F_\lambda(t,s)=F_\lambda(t,s)R(t,s)$, then
				$$F_\lambda(t,s)D_\lambda(t,s) = D_\lambda(t,s)F_\lambda(t,s).$$
				Finally, we conclude that $\lambda -A(t)$ is invertible and hence $\lambda\notin\sigma(A(t)).$

				\item Let $\lambda\in\sigma_p(A(t))$, then there exists $x\neq 0$ such that $x\in N(\lambda-A(t)).$
				From Corollary \ref{c2}, we get $x\in N[e^{\lambda s}-R(t,s)].$
				Therefore, we conclude that  $ e^{\lambda t}\in \sigma_p(R(t,s))$.
				
				\item  Let $\lambda\in\sigma_{a}(A(t))$ and a corresponding approximate eigenvector  $(x_{n})_{n \in \mathbb{N}} \subset \mathcal{D}$ , we define the sequence $(y_{n})$ by $y_{n}: = e^{\lambda s}x_{n} - R(t,s)x_{n} $\\
				By theorem \ref{t1} we have
				\begin{center}
					 $y_{n}: =\int_0^s e^{\lambda(s-h)}R(t,h)(\lambda - A(t))x_{n}dh. $
				\end{center}
			So there is a constant $c \; \rangle \;  0$ such that ,
		\begin{center}
				$\|y_{n}\| =  \int_0^s \|e^{\lambda(s-h)}R(t,h)(\lambda - A(t))x_{n}\|dh \leq c\|(\lambda - A(t))x_{n}\|\to 0 $ as $n\to \infty$	
		\end{center}
	Hence, $ e^{\lambda s}$ is an approximate eigenvalue of $R(t,s)$, and $(x_{n})_{n \in \mathbb{N}}$ serves as the same approximate eigenvector for all $t,s \geq 0$

		\item Let $\lambda\in \mathbb{C}$ such that $$ e^{\lambda t}\notin \sigma_e(R(t,s))).$$
				Then we have $\alpha[e^{\lambda t}-R(t,s)]<+\infty $ \, and \, $
				\beta[e^{\lambda t} - R(t,s)]<+\infty.$
				Therefore, by Corollary \ref{c2}, we conclude that
				$\alpha[\lambda-A(t)]<+\infty$ \, and \, $\beta[\lambda-A(t)]<+\infty,$
				and hence
				$\lambda\notin \sigma_e(A)$.
				
			\item 	Let  $\lambda\in\sigma_{r}(A(t))$,then $ Rg[\lambda-A(t)] $ is not dense , now we use the corollary \ref{c2} we obtain  
			\begin{center}
				$Rg[e^{\lambda s}-R(t,s)]\subseteq Rg[\lambda-A(t)].$
			\end{center}
			Thus $Rg[e^{\lambda s}-R(t,s)]$ is not dense and finally  $e^{\lambda s} \in \sigma_r(R(t,s)).$
				
			\end{enumerate}
		\end{proof}
		
		In the next theorem, we will prove that the spectral inclusion of $C_{0}$-quasi-semigroups remains true for the regular spectrum.
			\begin{thm} 
			For the generator $(A(t),\mathcal{D})$  of a $C_{0}$-quasi-semigroup $(R(t ;s))_{t , s\geq 0 }$ on a Banach space $X$ ,we have the inclusion :
			
			\begin{center}
				 $ e^{\sigma_{\gamma}(A(t))s}\subset \sigma_{\gamma}(R(t,s))$
			\end{center}
			\end{thm}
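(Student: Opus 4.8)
The natural route is to prove the contrapositive: fix $\lambda\in\mathbb C$ and $t,s\ge 0$ and assume that $e^{\lambda s}-R(t,s)$ is semi-regular, i.e. $e^{\lambda s}\notin\sigma_\gamma(R(t,s))$; the goal is then to show that $\lambda-A(t)$ is semi-regular, hence $\lambda\notin\sigma_\gamma(A(t))$. By the definition recalled in the introduction this amounts to establishing two conditions: (I) $Rg[\lambda-A(t)]$ is closed, and (II) $N[\lambda-A(t)]\subseteq Rg^\infty[\lambda-A(t)]$. I would verify these separately, relying on the factorizations of Theorem \ref{t1} and the inclusions of Corollary \ref{c2}.

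Condition (II) is the easy half and follows from a chain of inclusions. Since $e^{\lambda s}-R(t,s)$ is semi-regular we have $N[e^{\lambda s}-R(t,s)]\subseteq Rg^\infty[e^{\lambda s}-R(t,s)]$. Combining this with items (3) and (7) of Corollary \ref{c2} gives
\[
N[\lambda-A(t)]\subseteq N[e^{\lambda s}-R(t,s)]\subseteq Rg^\infty[e^{\lambda s}-R(t,s)]\subseteq Rg^\infty[\lambda-A(t)],
\]
which is precisely (II).

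For condition (I) I would work with the reduced minimum modulus and the factorization of Theorem \ref{t1}(2). Closedness of $Rg[e^{\lambda s}-R(t,s)]$ provides a constant $\gamma>0$ with $\|[e^{\lambda s}-R(t,s)]x\|\ge\gamma\,d\big(x,N[e^{\lambda s}-R(t,s)]\big)$ for all $x\in X$. Since $[e^{\lambda s}-R(t,s)]x=D_\lambda(t,s)(\lambda-A(t))x$ for $x\in\mathcal D$ and $D_\lambda(t,s)$ is bounded, this produces the lower bound
\[
\|(\lambda-A(t))x\|\ \ge\ \frac{1}{\|D_\lambda(t,s)\|}\big\|[e^{\lambda s}-R(t,s)]x\big\|\ \ge\ \frac{\gamma}{\|D_\lambda(t,s)\|}\,d\big(x,N[e^{\lambda s}-R(t,s)]\big)
\]
for every $x\in\mathcal D$. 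If one can upgrade the right-hand side to $c\,d(x,N[\lambda-A(t)])$ for some $c>0$, then $\lambda-A(t)$ is bounded below modulo its kernel and (I) follows.

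The main obstacle is exactly this last step, namely comparing the distance to $N[e^{\lambda s}-R(t,s)]$ with the distance to the generally smaller subspace $N[\lambda-A(t)]$; the inclusion in Corollary \ref{c2}(3) is typically strict, since $N[e^{\lambda s}-R(t,s)]$ may absorb kernels coming from the other points $\mu$ with $e^{\mu s}=e^{\lambda s}$, which is the same phenomenon that prevents equality in the spectral mapping for the point spectrum. To overcome it I would not rely on the modulus estimate alone but exploit the full strength of the semi-regularity of $e^{\lambda s}-R(t,s)$: using its Kato-type decomposition together with the commutation of $D_\lambda(t,s)$ and $R(t,s)$ with $A(t)$ (Theorem \ref{t1} and Corollary \ref{c2}), and the fact, already established in (II), that the excess kernel directions lie in $Rg^\infty[\lambda-A(t)]$, one transports the decomposition to $\lambda-A(t)$ and reads off the closedness of its range. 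I expect the technical core of the argument to be concentrated precisely here.
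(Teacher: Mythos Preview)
Your treatment of condition (II) is correct and coincides with the paper's (the chain $N[\lambda-A(t)]\subseteq N[e^{\lambda s}-R(t,s)]\subseteq Rg^\infty[e^{\lambda s}-R(t,s)]\subseteq Rg^\infty[\lambda-A(t)]$ is exactly what is used). The gap is in condition (I). Your reduced minimum modulus estimate only yields a lower bound in terms of $d\big(x,N[e^{\lambda s}-R(t,s)]\big)$, and as you correctly observe, this cannot be converted into a bound in terms of $d\big(x,N[\lambda-A(t)]\big)$ because the inclusion of kernels may be strict. Your proposed fix---``transport the Kato-type decomposition''---is not an argument; nothing concrete is done with it, and you yourself flag this as the unproved technical core.

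The paper closes this gap with a specific device that you should know. Set $M:=Rg^\infty[e^{\lambda s}-R(t,s)]$; this is closed (Proposition cited from Mbekhta, since $e^{\lambda s}-R(t,s)$ is semi-regular) and invariant under every $R(t',s')$, so one can form the quotient $C_0$-quasi-semigroup $(\widehat{R(t,s)})_{t,s\ge 0}$ on $X/M$ with generator $\widehat{A(t)}$. The Kordula--M\"uller lemma (Lemma~\ref{l1}) says that the operator induced by a semi-regular $T$ on $X/Rg^\infty(T)$ is bounded below; applied here, $e^{\lambda s}-\widehat{R(t,s)}$ is bounded below, i.e.\ $e^{\lambda s}\notin\sigma_a(\widehat{R(t,s)})$. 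Now invoke the \emph{already proved} approximate point spectrum inclusion for the quotient quasi-semigroup to obtain $\lambda\notin\sigma_a(\widehat{A(t)})$, so $Rg(\lambda-\widehat{A(t)})$ is closed in $X/M$. Finally lift back: if $(\lambda-A(t))v_n\to u$ in $X$, then $\widehat{u}=(\lambda-\widehat{A(t)})\widehat{w}$ for some $w\in\mathcal D$, hence $u-(\lambda-A(t))w\in M\subseteq Rg^\infty[\lambda-A(t)]\subseteq Rg[\lambda-A(t)]$ by Corollary~\ref{c2}(7), and therefore $u\in Rg[\lambda-A(t)]$. Thus the missing ingredient is precisely the passage to $X/M$ together with Lemma~\ref{l1}, which converts semi-regularity into an approximate point spectrum statement and lets you reuse Theorem~2.2(3) on the quotient instead of fighting the kernel mismatch on $X$.
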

	To prove this result, we need the following proposition and lemma.
			\begin{prop} \cite[Corrolary 1.5]{r.9}
				Let $T$ be a closed operator.\\
				If $T$ is semi regular then $Rg^\infty(T)$ is closed
			\end{prop}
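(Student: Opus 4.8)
The plan is to reduce the statement to the single fact that every power $T^n$ has closed range, because $Rg^\infty(T)=\bigcap_{n\geq 1}Rg(T^n)$ is then an intersection of closed subspaces and hence closed for free. So the entire difficulty concentrates in the claim that semi-regularity forces $Rg(T^n)$ to be closed for every $n\geq 1$, which I would prove by induction on $n$, the hypothesis $N(T)\subset Rg^\infty(T)$ being consumed exactly in the inductive step.

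First I would record the minimum-modulus description of closed range for a \emph{closed} operator. Since $T$ is closed, $N(T)$ is closed, and $T$ induces an injective closed operator $\hat T$ on the quotient $X/N(T)$ (with the quotient of the original norm) via $\hat T\,q(x)=Tx$, where $q\colon X\to X/N(T)$ is the canonical map and $Rg(\hat T)=Rg(T)$. Semi-regularity gives that $Rg(T)$ is closed, hence a Banach space on which the closed injective map $\hat T^{-1}$ is everywhere defined; by the closed graph theorem $\hat T^{-1}$ is bounded. This produces a constant $C>0$ with
$$\operatorname{dist}(x,N(T))\leq C\,\|Tx\|\qquad (x\in D(T)),$$
and this is the only quantitative input the argument needs.

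For the induction, the base case $Rg(T^1)=Rg(T)$ is closed by the definition of semi-regularity. Assuming $Rg(T^n)$ is closed, take $y_k=T^{n+1}x_k\to y$ and set $z_k:=T^nx_k\in Rg(T^n)\cap D(T)$, so that $Tz_k=y_k$. The displayed estimate gives $\operatorname{dist}(z_k-z_l,N(T))\leq C\|y_k-y_l\|\to 0$, and a standard telescoping construction yields (along a subsequence) vectors $w_k\in N(T)$ with $z_k-w_k$ Cauchy, say $z_k-w_k\to z$. Here semi-regularity enters decisively: since $N(T)\subset Rg^\infty(T)\subset Rg(T^n)$, each $w_k$ lies in $Rg(T^n)$, whence $z_k-w_k\in Rg(T^n)$ and the inductive hypothesis forces $z\in Rg(T^n)$, i.e.\ $z=T^nu$. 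Finally $T(z_k-w_k)=Tz_k=y_k\to y$ together with $z_k-w_k\to z$ and the closedness of $T$ give $z\in D(T)$ and $Tz=y$; hence $y=Tz=T^{n+1}u\in Rg(T^{n+1})$, so $Rg(T^{n+1})$ is closed. Intersecting over all $n$ finishes the proof.

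The main obstacle I anticipate is not the induction but the passage to the minimum-modulus inequality \emph{in the original norm}: for an unbounded closed $T$ one must avoid the graph norm and instead derive boundedness of $\hat T^{-1}$ on the Banach space $Rg(T)$ from the closed graph theorem, which is precisely where closedness of $T$ (hence of $\hat T^{-1}$) is used. The secondary point requiring care is the extraction of the Cauchy sequence $z_k-w_k$ from the distance estimate, together with the observation that, because $N(T)\subset Rg^\infty(T)$, the correcting vectors $w_k$ never leave the closed subspace $Rg(T^n)$ — this is the exact place the semi-regularity hypothesis is spent.
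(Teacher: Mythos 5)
The paper gives you nothing to compare against here: this proposition is quoted verbatim from Mbekhta \cite[Corollary 1.5]{r.9} and no proof is reproduced in the text. Judged on its own, your argument is correct, and it is in substance the standard Kato--Mbekhta proof of that corollary. Your inequality $\operatorname{dist}(x,N(T))\leq C\,\|Tx\|$ for $x\in D(T)$ is exactly the positivity of the reduced minimum modulus $\gamma(T)$ of a closed operator with closed range (your closed-graph derivation is sound: $N(T)$ is closed because $T$ is, $\hat{T}^{-1}$ is everywhere defined on the Banach space $Rg(T)$, and its closedness follows from that of $T$ after choosing representatives $x_j-w_j\to x$), and your induction showing each $Rg(T^n)$ closed is precisely how the literature derives the corollary, where it is usually packaged as ``semi-regular implies every power $T^n$ is semi-regular,'' via estimates of the type $\gamma(T^n)\geq\gamma(T)^n$. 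The step where you correct the quotient-Cauchy sequence by vectors $w_k\in N(T)$ and observe that $z_k-w_k$ stays in the closed subspace $Rg(T^n)$ because $N(T)\subseteq Rg^\infty(T)\subseteq Rg(T^n)$ is exactly where the semi-regularity hypothesis is spent in those proofs as well, and the closedness of $T$ correctly delivers $z\in D(T)$, $Tz=y$, hence $y=T^{n+1}u\in Rg(T^{n+1})$. One minor streamlining: the telescoping subsequence is unnecessary --- since $X/N(T)$ is complete, $q(z_k)$ converges to some coset $q(z)$, and you may directly pick $w_k\in N(T)$ with $\|z_k-w_k-z\|\leq\operatorname{dist}(z_k-z,N(T))+1/k\to 0$.
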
 
		
			\begin{lem}\cite[Lemma 1]{r.8} \label{l1}
		Let $T \in B(X)$, If $T$ is semi-regular , then the operator
    $$ \hat{T} : X/Rg^\infty(T) \to X/Rg^\infty(T) $$
		 induced by T is bounded below.
		\end{lem}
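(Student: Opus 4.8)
The plan is to rely on the standard characterization that a bounded operator on a Banach space is bounded below if and only if it is injective and has closed range, and to verify both properties for $\hat{T}$. First I would install the framework. Since $T\in B(X)$ is in particular a closed operator and is assumed semi-regular, the preceding Proposition gives that $Rg^\infty(T)$ is closed, so that the quotient $X/Rg^\infty(T)$ is genuinely a Banach space. Moreover $Rg^\infty(T)$ is $T$-invariant, because $T(Rg(T^n))=Rg(T^{n+1})$ forces $T(Rg^\infty(T))\subseteq Rg^\infty(T)$; hence the induced map $\hat{T}[x]=[Tx]$ is well defined and bounded, with $\|\hat{T}\|\leq\|T\|$. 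It then suffices to establish injectivity and closedness of the range.

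For injectivity I would take $[x]\in N(\hat{T})$, that is $Tx\in Rg^\infty(T)$, and show $x\in Rg^\infty(T)$. Fixing $n\geq 1$, since $Tx\in Rg(T^n)$ I can write $Tx=T^n y$, so that $T(x-T^{n-1}y)=0$, i.e. $x-T^{n-1}y\in N(T)$. Here semi-regularity enters decisively: $N(T)\subseteq Rg^\infty(T)\subseteq Rg(T^{n-1})$, whence $x-T^{n-1}y\in Rg(T^{n-1})$ and therefore $x\in Rg(T^{n-1})$. As $n\geq 1$ is arbitrary, $x\in\bigcap_{m\geq 1}Rg(T^m)=Rg^\infty(T)$, so $[x]=0$ and $\hat{T}$ is injective.

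For the range I would write $\pi$ for the quotient map and note $Rg(\hat{T})=\pi(Rg(T))$. Because $Rg^\infty(T)\subseteq Rg(T)$, one has $\pi^{-1}(\pi(Rg(T)))=Rg(T)+Rg^\infty(T)=Rg(T)$, which is closed precisely by the semi-regularity of $T$; by the defining property of the quotient topology, $\pi(Rg(T))=Rg(\hat{T})$ is then closed in $X/Rg^\infty(T)$. Combining injectivity with closed range, the bounded inverse theorem applied to $\hat{T}:X/Rg^\infty(T)\to Rg(\hat{T})$ furnishes a constant $c>0$ with $\|\hat{T}[x]\|\geq c\,\|[x]\|$ for all $[x]$, which is exactly the assertion that $\hat{T}$ is bounded below. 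The main obstacle is the injectivity step, where the inclusion $N(T)\subseteq Rg^\infty(T)$ built into semi-regularity is exactly what is needed to propagate $x$ into every $Rg(T^{n-1})$; the closed-range step is essentially formal once the quotient-topology criterion for closedness is invoked.
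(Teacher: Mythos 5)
Your proof is correct and complete, but note that there is no internal proof to compare it with: the paper imports this lemma verbatim from Kordula and M\"uller \cite[Lemma 1]{r.8} and gives no argument of its own. Your argument is essentially the standard one from the literature, and you handle the two places where semi-regularity enters exactly right: the kernel condition $N(T)\subseteq Rg^\infty(T)$ drives the injectivity of $\hat{T}$, and the closedness of $Rg(T)$, combined with $Rg(T)+Rg^\infty(T)=Rg(T)$ and the quotient-topology criterion, gives the closedness of $Rg(\hat{T})$; you also correctly invoke the preceding proposition (Mbekhta) to guarantee that $Rg^\infty(T)$ is closed, so that $X/Rg^\infty(T)$ is a Banach space and the bounded inverse theorem is actually available. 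The one point where you deviate slightly from the original source is the injectivity step: Kordula--M\"uller use the fact that a semi-regular operator maps its hyper-range onto itself, $T(Rg^\infty(T))=Rg^\infty(T)$, so that from $Tx\in Rg^\infty(T)$ one picks $u\in Rg^\infty(T)$ with $Tu=Tx$ and concludes $x-u\in N(T)\subseteq Rg^\infty(T)$ in one stroke; your version avoids this surjectivity fact by descending one power at a time, writing $Tx=T^{n}y$ and using $N(T)\subseteq Rg(T^{n-1})$ to place $x$ in every $Rg(T^{n-1})$, which is marginally longer but needs less machinery. A cosmetic remark: in that step the case $n=1$ only yields $x\in Rg(T^{0})=X$, which is vacuous, so the substantive range is $n\geq 2$; this does not affect the conclusion $x\in\bigcap_{m\geq 1}Rg(T^{m})=Rg^\infty(T)$.
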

	
		\begin{proof}
	Let $\lambda\in \mathbb{C}$ and $ s_{0} > 0$ be fixed such that $ e^{\lambda s_{0}} \notin \sigma_\gamma(R(t,s_{0}))$ for all $t\geq 0,$ then $e^{\lambda s_{0}}-R(t,s_{0})$ is semi-regular. We show that $\lambda - A(t) $ is  semi-regular.\\
 For this, consider the closed 
  $(R(t,s))_{t,s \geq 0}$-invariant subspace $ M := Rg^\infty(e^{\lambda s_{0}} - R(t,s_{0}))$ of X and the quotient $C_{0}$-quasi-semigroup $(\widehat{R(t,s)})_{t,s\geq 0}$ defined on $X/M$ by
  $$\widehat{R(t,s)}\widehat{x} := \widehat{R(t,s)x} ,\; \;  for \; \; \widehat{x} \in X/M $$
with generator $\widehat{A(t)}$ defined by
	$$\mathcal{\widehat{D}}:= \{ \widehat{x} ,\;  x\in \mathcal{D} \},\; \; \; \widehat{A(t)}\widehat{x} := \widehat{A(t)x} ,\; \;  for \; \; \widehat{x} \in  \mathcal{D}$$
	
	From Lemma \ref{l1} , it follows that the operator $e^{\lambda s_{0}}-R(t,s_{0})$ is bounded below. Thus, $ e^{\lambda } \notin \sigma_a(R(t,s_{0})) $ and from theorem 2.2 (3) $ \lambda \notin \sigma_a(\widehat{A(t)}) $.\\
	In consequence, the operator $ \lambda - \widehat{A(t)} $ is injective and the closed range and from corollary 2.2 (7) 
	Then , $$ N(\lambda - A(t)) \subset Rg^\infty((\lambda - A(t)) $$
	
	Now, we show that $Rg(\lambda - A(t))$is closed.\\
	To do this, consider a sequence $(u_{n})_{n \geq 0}$ of elements of $Rg(\lambda - A(t))$ , which converges to u. Then, there exists a sequence $(v_{n})_{n \geq 0}$ of elements of $\mathcal{D}$ such that $(\lambda - A(t))u_{n} = v_{n} \to u $. Since $Rg(\lambda - \widehat{A(t)})$ is closed, there exists $\widehat{w} \in \widehat{\mathcal{D}}$ such that $\widehat{u}= (\lambda - A(t))\widehat{w}$.\\
	Hence, $u - (\lambda - A(t))w \in Rg^\infty(e^{\lambda s_{0}}-R(t,s_{0})) \subset  Rg^\infty(\lambda - A(t)) \subset Rg(\lambda - A(t)) $.
	Therefore, $u \in Rg(\lambda - A(t)) $\\
	Consequently, the operator $\lambda  - A(t)$  is  semi-regular.
	\end{proof}
	
	In the next work we will try to demonstrate the equality of the spectra or give counter-examples in the case of a strict inclusion.

	{
		}

	\end{document}